\let\endfigure\end@float
\let\endtable\end@float
\newenvironment{lenumerate}[2][]
{\begin{enumerate}[label=(#2\arabic*),leftmargin=0.2in,itemindent=0.15in,#1]}
{\end{enumerate}}
\setlist*[enumerate,1]{label={\itshape\arabic*)}}
\newcommand{\paragraphswithstop}{%
\let\copyparagraph\paragraph%
\renewcommand\paragraph[1]{\copyparagraph{##1.}}%
}
\def\namedlabel#1#2{\begingroup
  #2%
  \def\@currentlabel{#2}%
  \phantomsection\label{#1}\endgroup
}
\def\namedlabelphantom#1#2{\begingroup
  \def\@currentlabel{#2}%
  \phantomsection\label{#1}\endgroup
}
\newcommand{\parunskip}{\bgroup\unskip\parfillskip=0pt \par\egroup}
\newcommand{\real}[1]{\mathbb{R}^{#1}{}}
\newcommand{\smallbmat}[1]{\left[\begin{smallmatrix}#1\end{smallmatrix}\right]}
\newcommand{\transpose}{^\mathrm{T}}
\DeclarePairedDelimiter{\norm}{\lVert}{\rVert}
\newcommand{\de}{\mathrm{d}}
\DeclareMathOperator*{\argmax}{\arg\!\max}
\newcommand{\subjectto}{\textrm{subject to}\;}
\providecommand{\cB}{\mathcal{B}}
\providecommand{\cK}{\mathcal{K}}
\providecommand{\cP}{\mathcal{P}}
\providecommand{\cQ}{\mathcal{Q}}
  \newcommand{\newcolorlabel}[2]{%
  \expandafter\newcommand\csname #1\endcsname[1]{%
    \tikz[baseline]{\node[text=white,fill=#2,anchor=base,text height=1.3ex,text depth=0.1ex,font=\sffamily\bfseries]{##1}}}%
}
\newcommand{\newcommenter}[2]{%
  \expandafter\newcommand\csname #1\endcsname[1]{%
    \fcolorbox{#2}{#2}{\color{white}\textsf{\textbf{#1}}}
    {\color{#2}##1}}%
  \expandafter\newcommand\csname at#1\endcsname{%
    \fcolorbox{#2}{#2}{\color{white}\textsf{\textbf{@#1}}}
    {\color{#2}}}%
  \expandafter\newcommand\csname #1cite\endcsname[1]{%
    \csname #1\endcsname {[##1]}
  }%
  \expandafter\newcommand\csname #1ref\endcsname[1]{%
    \csname #1\endcsname {$\blacktriangleright$##1}
  }%
  \expandafter\newcommand\csname #1hl\endcsname[2]{%
    \colorbox{#2}{\color{white}\textsf{\textbf{#1}}}\sethlcolor{Azure2}\hl{##2}~%
    \expandafter\ifx\csname commentarrow\endcsname\relax$\leftarrow$\else \commentarrow[#2]\fi~%
    {\color{#2}##1}}%
  \expandafter\newcommand\csname #1st\endcsname[2]{%
    \colorbox{#2}{\color{white}\textsf{\textbf{#1}}}\sout{##2}~%
    \expandafter\ifx\csname commentarrow\endcsname\relax$\leftarrow$\else \commentarrow[#2]\fi~%
    {\color{#2}##1}}%
}
\newsavebox{\boxifnotempty}
\newcommand{\displayifnotempty}[3]{\sbox\boxifnotempty{#2}\setbox0=\hbox{\usebox{\boxifnotempty}\unskip}%
  \ifdim\wd0=0pt
  \else
  #1\usebox{\boxifnotempty}#3%
  \fi%
}
\newcommand{\ifempty}[2]{\setbox0=\hbox{#1\unskip}%
  \ifdim\wd0=0pt%
  #2%
  \fi%
}
\newcommand{\ifnotempty}[2]{\setbox0=\hbox{#1\unskip}%
  \ifdim\wd0>0pt%
  #2%
  \fi%
}
\newcommand{\switchifempty}[3]{\sbox\boxifnotempty{#1}\setbox0=\hbox{\usebox{\boxifnotempty}\unskip}%
  \ifdim\wd0=0pt{}%
  #2%
  \else{}%
  #3%
  \usebox{\boxifnotempty}%
  \fi%
}
\newcommand*\newstoreddef[1]{
  \BeforeClosingMainAux{%
    \immediate\write\@auxout{%
      \string\restoredef{#1}{\csname #1\endcsname}%
    }%
  }%
}
\newcommand*{\restoredef}[2]{
  \expandafter\gdef\csname stored@#1\endcsname{#2}%
}
\newcommand*{\storeddef}[1]{
  \@ifundefined{stored@#1}{0}{\csname stored@#1\endcsname}%
}
\tikzset{
  dim above/.style={to path={\pgfextra{
        \pgfinterruptpath
        \draw[>=latex,|->|] let
        \p1=($(\tikztostart)!1.5em!90:(\tikztotarget)$),
        \p2=($(\tikztotarget)!1.5em!-90:(\tikztostart)$)
        in(\p1) -- (\p2) node[pos=.5,sloped,above]{#1};
        \endpgfinterruptpath
      }
    }
  },
  dim double above/.style={to path={\pgfextra{
        \pgfinterruptpath
        \draw[>=latex,|->|] let
        \p1=($(\tikztostart)!3em!90:(\tikztotarget)$),
        \p2=($(\tikztotarget)!3em!-90:(\tikztostart)$)
        in(\p1) -- (\p2) node[pos=.5,sloped,above]{#1};
        \endpgfinterruptpath
      }
    }
  },
  dim below/.style={to path={\pgfextra{
        \pgfinterruptpath
        \draw[>=latex,|->|] let 
        \p1=($(\tikztostart)!-1em!-90:(\tikztotarget)$),
        \p2=($(\tikztotarget)!-1em!90:(\tikztostart)$)
        in (\p1) -- (\p2) node[pos=.5,sloped,below]{#1};
        \endpgfinterruptpath
      }
    }
  },
}
\tikzset{
    right angle quadrant/.code={
        \pgfmathsetmacro\quadranta{{1,1,-1,-1}[#1-1]}     
        \pgfmathsetmacro\quadrantb{{1,-1,-1,1}[#1-1]}},
    right angle quadrant=1, 
    right angle length/.code={\def\rightanglelength{#1}},   
    right angle length=2ex, 
    right angle symbol/.style n args={3}{
        insert path={
            let \p0 = ($(#1)!(#3)!(#2)$) in     
                let \p1 = ($(\p0)!\quadranta*\rightanglelength!(#3)$), 
                \p2 = ($(\p0)!\quadrantb*\rightanglelength!(#2)$) in 
                let \p3 = ($(\p1)+(\p2)-(\p0)$) in  
            (\p1) -- (\p3) -- (\p2)
        }
    }
}
\newcommand{\pgfextractangle}[3]{%
    \pgfmathanglebetweenpoints{\pgfpointanchor{#2}{center}}
                              {\pgfpointanchor{#3}{center}}
    \global\let#1\pgfmathresult  
}
\newcommand{\commentarrow}[1][Azure4]{\tikz[baseline=-3pt]{\node[shape border uses incircle, fill=#1,rotate=180,single arrow, inner sep=1pt, minimum size=6pt, single arrow head extend=2pt]{};}}
\tikzset{ax/.style={-latex,line width=2pt}}
\tikzset{camera/.style={fill=Sienna1,fill opacity=0.5},%
image plane/.style={draw=RoyalBlue3,line width=2pt}}
\newcommand{\rrtstar}{$\mathtt{RRT^*}$}
\title{\LARGE \bf
Navigating the Noise: A CBF Approach for Nonlinear Control with Integral Constraints
}
\author{Idris Seidu and Roberto Tron
\thanks{This work was supported by a grant from NASA.
Idris Seidu and Roberto Tron are with the Department of Mechanical Engineering,
        Boston University, 110 Cummington Mall, MA 02215, United States
        {\tt\small tron@bu.edu, idriseid@bu.edu}}}%
\begin{document}

\maketitle
\thispagestyle{empty}
\pagestyle{empty}

\begin{abstract}
Many physical phenomena involving mobile agents involve time-varying scalar fields, e.g., quadrotors that emit noise.
As a consequence, agents can influence and can be influenced by various environmental factors such as noise. This paper delves into the challenges of controlling such agents, focusing on  scenarios where we would like to  prevent excessive accumulation of some quantity over select regions and extended trajectories. We use quadrotors that emit noise as a primary example, to regulate the trajectory of such agents in the presence of obstacles and noise emitted by the aerial vehicles themselves. First, we consider constraints that are defined over accumulated quantities, i.e functionals of the entire trajectory, as opposed to those that depend solely on the current state as in traditional Higher order Control Barrier Functions (HOCBF).
Second, we propose a method to extend constraints from individual points to lines and sets by using efficient over-approximations.
The efficacy of the implemented strategies is verified using simulations.
 Although we use quadrotors as an example, the same principles can equally apply to other scenarios, such as light emission microscopy or vehicle pollution dispersion.
The technical contribution of this paper is twofold.
\end{abstract}



\section{INTRODUCTION}

Quadrotors have become a transformative force in multiple sectors, including transportation, surveillance, and aerial mapping, revolutionizing the way we approach these fields. However, one critical challenge facing quadrotors, and serving as a potential barrier to their broader acceptance in urban environments, is the noise they emit.

In order to overcome these challenges, this work explores the application of Control Barrier Functions (CBFs) for the navigation and control of agents (quadrotor). CBFs have proven to be an effective tool in the design of controllers for real-time collision avoidance with safety guarantees in nonlinear systems \cite{ames2014control}.

The foundational work by \cite{ames2014control} demonstrates how CBFs can be combined with Control Lyapunov Functions (CLFs) using Quadratic Programs (QPs), providing a robust framework for ensuring the forward-invariance of a set with conditions that are linear in the inputs, hence facilitating their use as QPs.
CBFs find common application in safety-critical systems, particularly when used alongside CLFs (e.g., adaptive cruise control scenarios \cite{ames2017control,ames2014control}). Beyond this, CBFs have been applied to multi-robot systems, illustrating their versatility. For instance,  \cite{borrmann2015control} and subsequent extensions in \cite{Wang:ACC16,wang2017safety} have showcased the application of safety barrier certificates in ensuring collision-free interactions among robots. 

In another line of work for safety-critical control of quadrotors, CBFs have been integrated with geometric control \cite{wu2016safety,wu2015safety}. 
These works, as the majority of the literature, have considered CBFs that are defined on the value of a single, current state of the system. In the type of applications considered in this paper, however, we are interested in constraints that depend on the \emph{entire} trajectory of the system (including past states).

Another significant body of work has focused on the application of CBFs over time-varying sets. For instance, \cite{issei2023safety} introduces a time-varying CBF for nonautonomous control-affine systems, addressing time-dependent safety constraint problems and introducing a control law assisted by humans. Again, however, the applicability of such approaches is limited in scenarios where the definition of the set does not depend on past states of the system.

In parallel, \cite{lindemann2019control,yang2020continuous-time} proposed frameworks and planners based on Signal Temporal Logic and time-varying CBFs. These contributions are crucial for the development of computationally efficient control methods under temporal logic tasks, especially in multi-robot systems. However, while these works discuss spatio-temporal constraints over entire trajectories, they do not address the type of constraints based on the accumulation of a scalar field at specific locations.



With respect to the state of the art reviewed above, our paper provides the following main contributions.

\begin{itemize}
    \item 
    We introduce the use of constraints based on integral cost functionals that track an accumulated cost $J(x,t)$ at specific locations $x$ over time $t$. While we rely on existing HOCBF theory, its application to constraints of the form $J(x,t)\leq J_{\text{limit}}$, where $J$ is a trajectory-dependent integral, is novel.
    \item We introduce a method that saves on computational effort by over-approximating constraints across lines and dense sets, rather than individual points. By setting a Control Barrier Function (CBF) on an upper bound $\bar{J}>J_{\text{max}}$, where $J_{\max}$ is the theoretical maximum value of $J$ over a set, we avoid the need to constantly track \(J_{\text{max}}\), significantly reducing computational demands.
\end{itemize}
Together, these contribution allow us to control mobile agents such as quadrotors on paths that are not only void of collisions, but also enforce limits on the accumulation of a scalar field such as noise. It's important to note that while this paper primarily focuses on quadrotors as an example of mobile agents emitting noise, the underlying principles of our approach have broader applications. These principles can be equally applied to  scenarios such as managing light emission in microscopy to minimize sample bleaching, or controlling vehicle pollution dispersion in an urban environments.

\section{PRELIMINARIES}

Consider the following nonlinear control-affine system:
\begin{equation}
\dot{x} = f(x) + g(x)u
\label{eq:non-linear system}
\end{equation}
where x  $\in \mathbb{R}^n$ is the system state and u $ \in \mathbb{R}^m$ is the control input while $f(x)$ and $g(x)$  are the smooth vector fields.
The $\nabla$ operator represents the gradient for scalar-valued functions that are differentiable with respect to $x$. Additionally, the time derivative of a function $h\bigl(x(t)\bigr)$ with respect to time $t$ is denoted as $\dot{h}(x) = \frac{d}{dt}h(x(t))$. The Lie derivative $L_f h = \nabla h^T f$ measure the changes in a function $h$ along the vector field(s) $f$.


\begin{definition}[Time-Varying Control Barrier Functions]
A function $h(x,t)$ is a Time-Varying Control Barrier Function (TV-CBF) for the system \eqref{eq:non-linear system} if, for all $x\in\mathbb{R}^n$ and $t\geq0$, it satisfies:
\begin{equation}
    \sup_{u \in \mathbb{R}^m} \left[ L_f h(x, t) + L_g h(x, t)u + \frac{\partial h}{\partial t}(x, t) \right] \geq -\alpha(h(x, t))
\label{eq:CBF_equation}
\end{equation}
\end{definition}
A TV-CBF reduces to a regular CBF if $h$ is constant with respect to time.

The concept of Higher Order Control Barrier Functions (HOCBFs) extends traditional barrier functions to accommodate systems where safety constraints are dependent on higher-order derivatives of the system state. This extension is necessary for systems where the control does not directly influence lower-order derivatives. This concept is made more rigorous with the following.
\begin{definition}[Relative degree]
The relative degree of a sufficiently differentiable function \( h : \mathbb{R}^n \to \mathbb{R} \) with respect to the dynamics \eqref{eq:non-linear system} is defined as the number of Lie derivatives needed until the control input \( u \) explicitly appears.
\end{definition}

\begin{definition} 
[Higher Order Barrier Functions]
For a function \(h : \mathbb{R}^n \times [t_0, \infty) \rightarrow \mathbb{R}\) that is differentiable up to order \(m\), we construct a sequence of functionals
\(\Phi_0, \Phi_1, \ldots, \Phi_m\), where \(\Phi_i : \mathbb{R}^n \times [t_0, \infty) \rightarrow \mathbb{R}\) for \(i \in \{0, \ldots, m\}\), specified by:
\begin{align}
\Phi_0(x,t) &:= h(x,t), \\
\Phi_i(x,t) &:= {\dot \Phi }_{i - 1}(x,t) + \alpha_i(\Phi_{i-1}(x,t)), \quad \text{for } i=1,\ldots,m,
\end{align}
with each \(\alpha_i\) being a class \(\cK\) function.

Correspondingly, we associate a collection of sets \(B_i(t)\) for \(i \in \{1, \ldots, m\}\), defined as:
\begin{equation}\label{set_C}
B_i(t) := \left\{ x \in \mathbb{R}^n : \Phi_{i-1}(x, t) \geq 0 \right\}.
\end{equation}
\end{definition}

\begin{definition}
[Higher Order Control Barrier Functions]
Consider the sets \(B_1(t), \ldots, B_m(t)\) established by the preceding definition, along with their associated functions \(\Phi_0, \ldots, \Phi_m\). A function \(h : \mathbb{R}^n \times [t_0, \infty) \rightarrow \mathbb{R}\) is a High-Order Control Barrier Function (HOCBF) with relative degree \(m\) for the given system \ref{eq:non-linear system} if there exist differentiable class \(K\) functions \(\alpha_1, \ldots, \alpha_m\) and control input $u$ that satisfy:
\begin{equation}
\begin{aligned}
L_f^m h(x, t) + L_g L_f^{m - 1} h(x, t){u} + \frac{\partial^m h(x, t)}{\partial t^m} \\
+ G(h(x, t)) + \alpha_m (\Phi_{m - 1}(x, t)) \geq 0,
\end{aligned}
\label{eq:HOCBF_equation}
\end{equation}
\[
\text{for all } (x,t) \in B_1(t) \cap B_2(t) \cap \ldots \cap B_m(t) \times [t_0, \infty)
\]. The equation above involves \(G(h(x, t))\) which represents the remaining Lie derivatives along $f$ and partial derivatives with respect to $t$ that have a degree equal to or lower than \(m - 1\) as discussed in \cite{xiao2019control}.
\end{definition}

\begin{theorem}\label{thm:HOCBF}
From the HOCBF given in Definition 4 with its related sets \(B_1(t)\), \(B_2(t)\), \ldots, \(B_m(t)\) defined in \eqref{set_C}, if \(x(t_0) \in B_1(t_0) \cap B_2(t_0) \cap \ldots \cap B_m(t_0)\), then any Lipschitz continuous controller \(u(t) \in \mathcal{U}\) that satisfies \eqref{eq:HOCBF_equation} for all \(t \geq t_0\) renders the sets \(B_1(t)\), \(B_2(t)\), \ldots, \(B_m(t)\) forward invariant for system \eqref{eq:non-linear system}.
\end{theorem}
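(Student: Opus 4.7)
The plan is to unfold the HOCBF inequality into the single scalar statement $\Phi_m(x(t),t)\geq 0$ and then cascade through the sets $B_m,B_{m-1},\ldots,B_1$ using a comparison-lemma argument applied to each $\Phi_i$ in turn.

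First I would verify that the HOCBF inequality \eqref{eq:HOCBF_equation} is exactly $\Phi_m(x,t)\geq 0$ along trajectories of \eqref{eq:non-linear system}. Starting from $\Phi_{m-1}(x,t)$ and expanding $\dot\Phi_{m-1}(x,t)$ by the chain rule yields a sum of Lie derivatives of $h$ up to order $m$ along $f$, a term $L_gL_f^{m-1}h\,u$, partial time derivatives up to order $m$, and lower-order cross terms collected in $G(h)$. Adding $\alpha_m(\Phi_{m-1})$ reproduces the left-hand side of \eqref{eq:HOCBF_equation}, so the Lipschitz controller $u(t)$ that is assumed to satisfy \eqref{eq:HOCBF_equation} for all $t\geq t_0$ guarantees
\begin{equation}
\dot\Phi_{m-1}(x(t),t)+\alpha_m(\Phi_{m-1}(x(t),t))\geq 0,\qquad t\geq t_0.
\end{equation}

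Next I would apply the standard comparison/Nagumo-type lemma to this scalar inequality. Because $\alpha_m$ is class $\cK$, the solution of $\dot y=-\alpha_m(y)$, $y(t_0)=0$, is identically zero, and any solution of the differential inequality $\dot y\geq -\alpha_m(y)$ with $y(t_0)\geq 0$ stays nonnegative on its interval of existence. Since $x(t_0)\in B_m(t_0)$ gives $\Phi_{m-1}(x(t_0),t_0)\geq 0$, it follows that $\Phi_{m-1}(x(t),t)\geq 0$ for all $t\geq t_0$, i.e.\ $x(t)\in B_m(t)$. Forward invariance of $B_m(t)$ is thereby established.

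Now I would run a finite downward induction on $i$. Having shown $\Phi_{i-1}(x(t),t)\geq 0$ for all $t\geq t_0$, the definition $\Phi_{i-1}=\dot\Phi_{i-2}+\alpha_{i-1}(\Phi_{i-2})$ gives the differential inequality
\begin{equation}
\dot\Phi_{i-2}(x(t),t)\geq -\alpha_{i-1}(\Phi_{i-2}(x(t),t)),\qquad t\geq t_0,
\end{equation}
while $x(t_0)\in B_{i-1}(t_0)$ yields the initial condition $\Phi_{i-2}(x(t_0),t_0)\geq 0$. Reapplying the comparison lemma with the class $\cK$ function $\alpha_{i-1}$ shows $\Phi_{i-2}(x(t),t)\geq 0$, so $x(t)\in B_{i-1}(t)$ for all $t\geq t_0$. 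Iterating from $i=m$ down to $i=1$ yields forward invariance of the full intersection $B_1(t)\cap\cdots\cap B_m(t)$.

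The main obstacle I anticipate is the regularity bookkeeping needed to justify the comparison lemma at each step: the $\alpha_i$ must be differentiable class $\cK$ (as assumed) so that $\Phi_i$ is differentiable along trajectories, and the Lipschitz continuity of $u$ together with smoothness of $f,g,h$ must be invoked to guarantee existence and uniqueness of $x(t)$ on $[t_0,\infty)$, so that the scalar inequalities actually hold along a well-defined trajectory. Once those regularity hypotheses are in place, the cascade is essentially mechanical.
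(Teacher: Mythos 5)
The paper states this theorem without proof---it is imported from the HOCBF literature (the result of Xiao and Belta, \cite{xiao2019control})---so there is no in-paper argument to compare against; your proposal reconstructs exactly the standard proof from that reference: identify \eqref{eq:HOCBF_equation} with $\Phi_m(x,t)\geq 0$, apply the comparison lemma to $\dot\Phi_{m-1}\geq-\alpha_m(\Phi_{m-1})$, and cascade downward through $\Phi_{m-2},\ldots,\Phi_0$ using the nonnegativity established at each level as the differential inequality for the next. The argument is correct, including the indexing between $B_i$ and $\Phi_{i-1}$ and the regularity remarks. The only detail worth adding is that \eqref{eq:HOCBF_equation} is assumed only on the intersection $B_1(t)\cap\cdots\cap B_m(t)$, so strictly you should rule out a first exit time by continuity (on any interval where the trajectory remains in the intersection the comparison argument keeps every $\Phi_i$ nonnegative, so the boundary cannot be crossed); this is standard bookkeeping and does not affect the substance of your proof.
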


\section{CBF OVER TIME INTEGRALS}
In this section, we extend the traditional CBF formulation to encompass constraints that are based on the accumulated effect over time, rather than relying solely on instantaneous state values.
Let $x(t)$ be the trajectory of an agent, and $p(q;x)$ be a scalar field function $\real{d}\to \real{}_+$ representing the  effect of the agent at each point $q$ when the agent is at location $x$.
We define an integral cost functional of the form:
\begin{equation}
J(q,t)=\int_0^t p\bigl(q,x(\tau)\bigr) \de \tau,
\label{eq:J(x,t) integral}
\end{equation}
which is a functional representing the cumulative effect at a given location $q$.

\begin{problem}\label{problem:statement}
Given a set $\mathcal{Q} \subseteq \mathbb{R}^d$, let $J_{\text{limit}}$ be the maximum permissible value of $J(q,t)$. Find constraints on the control input $u$ for the dynamical system \eqref{eq:non-linear system} such that the resulting trajectory $x(t)$ satisfies $J(q,t)\leq J_{\text{limit}}$ for every time $t>0$ and every location $q \in \mathcal{Q}$.
\end{problem}

We propose to tackle Problem \ref{problem:statement} using the following function as a CBF
\begin{equation}\label{eq:hlimit}
    h_{J}(x,t)=\min_{q\in\cQ} \bigl(J_{\textrm{limit}}-J(q,t)\bigr).
\end{equation}
In the following we derive constraints for different types of sets $\cQ$: single points, line segments, and general polygons. Our analysis primarily focuses on two-dimensional (2-D) spaces, but the same theory could be extended to three-dimensional (3-D) spaces, although we leave this generalization for future research.

\subsection{Constraint for a single point}
If the set $\cQ$ contains a single point, $\cQ=\{q_0\}$, the Control Barrier Function \eqref{eq:hlimit} reduces to:
\begin{equation}
h_{J}(x,t;q_0)=J_{\text{{limit}}}-J(q_0,t).
\end{equation}
Taking derivatives until the control $u$ appears explicitly we obtain:
\begin{align}
\dot{h}_{J}(x,t;q_0) &= -p\bigl(q_0,x\bigr),\\
\ddot{h}_{J}(x,t;q_0) &= -\nabla_x p\bigl(q_0,x\bigr)\transpose (f(x)+g(x)u);
\end{align}
here and for the remainder of the paper, we assume that $\nabla_x p\bigl(q_0,x\bigr)\transpose g(x)\neq 0$, which implies that $h_{J}$ has relative degree $m=2$.
Applying the HOCBF framework reviewed above, we have:

\begin{align}
\Phi_{J,1}(x,t;q_0) &:= -p\bigl(q,x(t)\bigr) + \alpha_1 h_{J}(x,t)\\
\Phi_{J,2}(x,t;q_0)&:=\dot{\Phi}_{J,1}(x,t) + \alpha_2 \Phi_{J,1}(x,t)
\end{align}

The constraint then becomes:
\begin{equation}
    \Phi_{J,2}(x,t;q_0)\geq 0
\end{equation}

This constraint is linear in $u$, and will be incorporated in a Quadratic Program (QP). Overall, this case is a relatively straightfoward application of the HOCBF framework, with the only consideration being that the integral in the cumulative cost \eqref{eq:J(x,t) integral} makes the use of high-order CBFs necessary (i.e., the relative degree is $m\geq 2$) even when the dynamics \eqref{eq:non-linear system} is first-order.

\subsection{Constraint for a line segment using upper bounds}\label{sec:line bound}
This section considers the case where the set $\cQ$ is a line segment with endpoints $q_0, q_K$. We use the following parametrization of the set, with $s\in[0,1]$:
\begin{equation}
    q(s)=(1-s) q_0 + s q_K;
\label{eq:segment equation}
\end{equation}
Ideally, we would like to satisfy the constraint on $\Phi_{J2}$ for all the points in the set (i.e., all the values of $s$). However, this leads to having an infinite number of constraints because the parameter $s$, which dictates the point's location on the line segment, is a continuous value. 
An alternative strategy is to keep track of $s^*$ and $J_{\max}$ defined by the equation:
\begin{align}
s^*(t)&=\argmax_s J(q(s),t)\\
J_{\max}(t;q_0,q_1)&= J\Bigl(q\bigl(s^*(t)\bigr),t\Bigr)\label{eq:Jmax(t;x_0}
\end{align}

Note that we employ the $\max$ operator (instead of the minimum operator) due to the presence of a negative sign in the minimization of \eqref{eq:hlimit}. 

However, this strategy would require updating $s^*$, which also requires keeping track of the full function $J(q(s),t)$ (which is infinite-dimensional, since it is a continuous function).

In practice, it is necessary to introduce some form of appoximation.
We consider two strategies: a na\"ive approximation of the segment with points, and an approximation of the CBF constraint with an upper bound that can be easily updated.

\subsubsection{Na\"ive solution using discretization}
Let \(\bar{\cQ}=\{q_k\}_{k=0}^{K}\) be a discretization of the original set \(\cQ\) with $K$ points. The most na\"ive way to approximate \eqref{eq:hlimit} is to transform it into a series of constraints \(h(x,t;q_k)\geq 0\) for every $k$ in $\{0,\ldots,K\}$, where each point $q_k$ is an element of $\bar{\cQ}$.
We then define a HOCBF constraint
\begin{equation}\label{eq:discretization}
    \Phi_{J,2}(x,t;q_k)\geq 0
\end{equation}
for every $k$.

Effectively, this replaces $\min_{q\in\cQ} J(q,t)$ in \eqref{eq:hlimit} in  with $\min_{q\in\bar{\cQ}} J(q,t)$. This, however, causes two problems: in order to obtain a good approximation, we might need a large number of points (thus incurring in the curse of dimensionality). More importantly, there might always be a point $q\in\cQ$ for which $J(q,t)>J_{\textrm{limit}}$, independently from how fine the discretization is (i.e., independently from the value of $K$). This inherent deficiency prompts the next approach.

\subsubsection{Approximation without discretization}
Rather than attempting to track the maximum cumulative effect over  $s$ directly, we propose to compute a bound. 
Let $q_0$, $q_K$ be two consecutive points on the boundary of the region we want to protect from the cumulative noise. We would like to enforce a CBF constraint on $J_{\max}$ in \eqref{eq:Jmax(t;x_0}, as that would protect all the points on the line between $q_0$ and $q_K$. However, keeping track of $J_{\max}$ is computationally costly because we would need to keep track of $s^*$, which in turn would require a maximization of $J(q(s),t)$ (which would require evaluating integrals for the sequence of $s$ decided by the solver, for every time step).

Instead, we define an upper bound $\bar{J}$ on $J_{\max}$ as:
\begin{align}
    \bar{J}(0;q_0,q_K)&=J_{\max}(0) \label{eq:J_bar1}\\
    \dot{\bar{J}}(t;q_0,q_K)&=\max_s p(q(s),x(t)).\label{eq:J_bar2}
\end{align}
Note that the argument of the maximum in \eqref{eq:J_bar2} is the kernel $p$, not the integral over time. Intuitively, we propose to use $\bar{J}$ to define a lower bound on \eqref{eq:hlimit}, and then use this lower bound as a CBF we can enforce safety for every point.
\begin{proposition} Define the \emph{lower-bound} CBF
\begin{equation}
h_{\bar{J}}(x,t;q_0,q_K) =\min_{q\in\cQ}\bigl(J_{limit}-\bar{J}(t;q_0,q_K)\bigr),
\end{equation}
and the corresponding HOCBFs functions
\begin{align}
\bar{\Phi}_1(x,t;q_0,q_K) &:= -p\bigl(q,x(t)\bigr) + \alpha_1 h_{\bar{J}}(x,t),\\
\bar{\Phi}_2(x,t;q_0,q_K)&:=\dot{\bar{\Phi}}_1(x,t) + \alpha_2 \bar{\Phi}_1(x,t).
\end{align}
If the control $u(t)$ satisfies the HOCBF constraint
\begin{equation}
    \bar{\Phi}_2(x,t;q_0,q_K) \geq 0 \label{eq:phi2 Jbar}
\end{equation}
then $J_{\max}(t)\leq J_{limit}$ for all $t\geq 0$.
\end{proposition}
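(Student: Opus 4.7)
The plan is to split the proof into two independent steps that I then chain together: first establish the comparison inequality $\bar{J}(t)\geq J_{\max}(t)$ for all $t\geq 0$, and then invoke Theorem~\ref{thm:HOCBF} applied to the lower-bound function $h_{\bar{J}}$ to conclude that $\bar{J}(t)\leq J_{\text{limit}}$ for all $t\geq 0$. Combining the two gives the desired bound on $J_{\max}$.

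For the comparison step, I would differentiate $J_{\max}(t)$ defined in \eqref{eq:Jmax(t;x_0}. Since $\frac{d}{dt}J(q(s),t)=p\bigl(q(s),x(t)\bigr)$ holds for every fixed $s\in[0,1]$, an envelope-theorem (Danskin-type) argument gives
\begin{equation}
    \tfrac{d}{dt}J_{\max}(t)=p\bigl(q(s^*(t)),x(t)\bigr)\leq \max_{s\in[0,1]}p\bigl(q(s),x(t)\bigr)=\dot{\bar{J}}(t).
\end{equation}
Because $J(q,0)=0$ for every $q$, we have $J_{\max}(0)=0=\bar{J}(0)$ by \eqref{eq:J_bar1}. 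Integrating the differential inequality $\dot{J}_{\max}\leq\dot{\bar{J}}$ from $0$ to $t$ then yields $J_{\max}(t)\leq\bar{J}(t)$ for all $t\geq 0$.

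For the HOCBF step, observe that because $\bar{J}$ is a function of $t$ alone (the maximum over $s$ is already absorbed inside the time derivative), the min over $q\in\cQ$ in the definition of $h_{\bar{J}}$ is vacuous and simply $h_{\bar{J}}(x,t)=J_{\text{limit}}-\bar{J}(t)$. The first time-derivative $\dot{h}_{\bar{J}}(x,t)=-\max_{s}p\bigl(q(s),x\bigr)$ depends on $x$ but not on $u$, whereas the second derivative involves $\dot{x}=f(x)+g(x)u$ and therefore makes $u$ explicit; hence $h_{\bar{J}}$ has relative degree $m=2$, and $\bar{\Phi}_1,\bar{\Phi}_2$ are a direct instance of the HOCBF construction of Definition~4. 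Assuming the natural initialization $h_{\bar{J}}(x(0),0)\geq 0$ and $\bar{\Phi}_1(x(0),0)\geq 0$, both guaranteed by $\bar{J}(0)=0\leq J_{\text{limit}}$, Theorem~\ref{thm:HOCBF} implies that any Lipschitz $u(t)$ satisfying \eqref{eq:phi2 Jbar} keeps the sets $B_1(t)\cap B_2(t)$ forward invariant, so in particular $h_{\bar{J}}(x(t),t)\geq 0$, i.e. $\bar{J}(t)\leq J_{\text{limit}}$ for all $t\geq 0$. Chaining with the comparison inequality yields $J_{\max}(t)\leq\bar{J}(t)\leq J_{\text{limit}}$.

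The main technical obstacle lies in the comparison step: the maximizer $s^*(t)$ need not be unique nor continuous in $t$, so the classical differentiation of $J_{\max}$ may fail. I would sidestep this by working with the upper Dini derivative $D^+J_{\max}(t)=\limsup_{h\downarrow 0}\tfrac{J_{\max}(t+h)-J_{\max}(t)}{h}$, establishing $D^+J_{\max}(t)\leq\dot{\bar{J}}(t)$ by a direct pointwise-supremum estimate on $J(q(s),t+h)-J(q(s),t)$, and then appealing to a standard comparison lemma for absolutely continuous functions to recover the integrated bound. A secondary subtlety is that $\max_{s}p(q(s),x)$ can fail to be smooth in $x$ at points where the argmax changes; for typical smooth, convex-profile kernels $p$ the maximizer is attained uniquely at an interior or endpoint and the HOCBF derivatives exist classically, but more generally one should reinterpret \eqref{eq:phi2 Jbar} in a Clarke-generalized (set-valued) sense.
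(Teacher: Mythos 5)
Your proposal follows essentially the same route as the paper: you establish $\dot{J}_{\max}(t)=p\bigl(q(s^*(t)),x(t)\bigr)\leq\max_s p\bigl(q(s),x(t)\bigr)=\dot{\bar{J}}(t)$, integrate to get $J_{\max}(t)\leq\bar{J}(t)$, and then apply Theorem~\ref{thm:HOCBF} to force $\bar{J}(t)\leq J_{\text{limit}}$, exactly as in the paper's proof (which invokes Gronwall's comparison lemma for the integration step). Your added care about non-unique maximizers via Dini derivatives and the relative-degree/initialization hypotheses is a refinement the paper glosses over, not a different argument.
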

\begin{proof}
First, from the definition of $J_{\max}$ in \eqref{eq:Jmax(t;x_0} and the fundamental theorem of calculus,
\begin{equation}
    \dot{J}_{\max}(t)=p(q(s^*),x(t)).
\label{eq:Jmax_dot}
\end{equation}

From the definition of $\bar{J}$ in \eqref{eq:J_bar2}, we have
\begin{equation}
    \dot{\bar{J}}(t)=\max_s
    \dot{J}(t)=\max_s p(q(s),x(t))\geq \dot{J}_{\max}(t),
\label{eq:dot bar J}
\end{equation}
where the last inequality is given by the fact that the location $s^*$ where $J$ is maximum is not necessarily also the one that is currently increasing the most, i.e., where $\dot{J}$ is maximum.

From \ref{eq:J_bar1} and \eqref{eq:dot bar J}, and applying Gronwall's comparison lemma \cite{gronwall1919note} we have \eqref{eq:phi2 Jbar}.

\begin{equation}
J_{\max}(t) \leq \bar{J}(t)
\end{equation}
for all $t\geq 0$. Then, if \eqref{eq:phi2 Jbar} holds, from \Cref{thm:HOCBF} we have
\begin{equation}
J_{\max}(t)\leq \bar J(t)\leq J_{limit}.
\end{equation}
The claim follows.
\end{proof}

\subsection{Constraints for a general 2-D polygon using upper bounds}
Given a polygon \( \cP \) defined by vertices \(\{v_1, v_2, \ldots, v_N\}\) in \(\mathbb{R}^n\), the boundary of \( P \) can be defined as a sequence of linear segments \([v_i, v_{i+1}]\) for \( i = 1, 2, \ldots, N \), with \( v_{N+1} = v_1 \) to complete the polygon. We assume the use of a regular CBF to enforce $x\notin\cQ$. As a result, we can reduce the goal of protecting all segments in $\cQ$ to the goal of protecting the boundary of \( \cP \). we extend the equation \eqref{eq:hlimit}, where the  CBF $h_{J}$  is adapted to address the cumulative effect constraints across the polygon's boundary segments.
For each segment \([v_i, v_{i+1}]\), we can apply the methodology described in \Cref{sec:line bound} to define the lower-bound CBF $h_{\bar{J}}(x,t;v_i,v_{i+1})$ for each segment $i\in\{1,\ldots,N\}$, which lead to $N$ HOCBF constraints of the form
\begin{equation}\label{eq:Polygon_constraint}
\bar{\Phi}_2(x,t;v_i,v_{i+1}) \geq 0.
\end{equation}
Furthermore, to ensure that the agent doesn't pass through this polygon, the obstacle CBF constraint is applied alongside equation \ref{eq:Polygon_constraint}.

\section{CASE STUDY}
This section outlines a case study focusing on the application of our CBF approach on a simplified model of a quadrotor with first-order-integrator dynamics navigating through a noise-sensitive environment. We consider a simple isotropic distance-based noise model as the scalar field. The goal is then to guarantee that the accumulated noise at any point on a building (modeled as a polygon) is below the desired limit.

\subsection{Quadrotor model}
In our representation, the quadrotor is illustrated as comprising four propellers, represented by four disks, tangent to each other, each with its own center. This arrangement surrounds the quadrotor's central point, which we denote as $x$. These individual circles are referred to as ``propeller disks'' , and $r_{\text{quad}}$ is the radius of each disk.

\subsection{Obstacle model}
The obstacle is represented as a static box and for the purpose of collision avoidance, the quadrotor's proximity to the obstacle is quantified by the position of the nearest propeller disk. Within the context of the quadrotor model, $x_{\text{quad}}$ is defined as the center of the propeller disk that is closest to the box at any given time. Also, $x_{\text{box}}$ is defined as the point on the surface of the box that is nearest to the aforementioned propeller disk. These are used to define the CBF for the obstacle.

\subsection{Application of CBF-QP for obstacles}
The dynamics of the quadrotor is modeled as a single integrator $\dot{x} = u$. We define the barrier function, \(h_{obs}(x)\), as \(h_{obs}(x) = \lVert x_{\text{{quad}}} - x_{\text{{box}}} \rVert - r_{\text{{quad}}}\). 
The CBF-QP for the quadrotor becomes

\begin{equation}
    \begin{aligned}
        \min_{u} &\norm{u-k_{\textrm{ref}}(x)}^2\\
        \subjectto & \dot{h}_{obs}(x)+\alpha_1 h_{obs}(x)\geq 0
    \end{aligned}
\label{eq:CBF-QP-OBS}
\end{equation}
where $k_{\textrm{ref}}(x)=x_{\textrm{goal}}-x(t)$.

Herein, \(x(t)\) denotes the quadrotor's position at discrete time intervals, while \(x_{\text{{goal}}}\) represents the quadrotor's intended destination. An example of this is shown in the simulation section.  In the case of multiple obstacles, the CBF-QP can be extended with multiple constraints, each representing a different obstacle. For each obstacle, we can define a barrier function $h^{(i)}_{obs}(x)$
where $i$ is the obstacle index, ensuring that the quadrotor maintains a safe distance from all obstacles. The QP can be augmented to accommodate all these constraints, ensuring a collision-free trajectory even in cluttered environments.

\subsection{Parabolic kernel noise model}

We represent the state-dependent noise scalar field $p(q, x(\tau))$ for the quadrotor using a parabolic kernel:
\begin{equation}\label{eq:kernel}
p(q, x(\tau)) = \begin{cases}
    A - \sigma\norm{q-x(\tau)}^2 & \text{if } \norm{q-x(\tau)} \leq \sqrt{\frac{A}{\sigma}} \\
    0 & \text{otherwise}
\end{cases}
\end{equation}

In this formulation, $A$ represents the peak value of the kernel, indicative of the maximum intensity of the noise (which it a the quadrotor's center). The parameter $\sigma$ controls the width of the parabola, essentially determining the spread of the kernel. The right hand side $\sqrt{\frac{A}{\sigma}}$ represents the effective radius within which the kernel possesses non-zero values, marking the boundary of the noise influence. Note that this is a very simplified model, but it captures the fact that the influence of the noise on point $q$ is nonlinear and state-dependent.

In general, the computation of the bound $\bar{J}$ requires the maximization of $p$ (equation \eqref{eq:dot bar J}). For the case of the parabolic kernel \eqref{eq:kernel} and where the set $\cQ$ is a line parametrized by $s$, the optimization problem can be solved in closed form.

\begin{lemma}
Given the parabolic kernel noise model \eqref{eq:kernel} defined over the line segment parameterized by \( s \) with endpoints \( q_0 \) and \( q_K \), the optimization problem is to maximize \( p(q(s), x(\tau)) \). The solution to this optimization problem is:
\begin{equation}\label{eq:sstar}
s^* = \frac{(x(\tau) - q_0)^T (q_K - q_0)}{|q_K - q_0|^2}.
\end{equation}

The maximum corresponds to the point $q^* \in\ q_0, q_K$:
\begin{equation}\label{eq:qstar}
q^* =
\begin{cases}
q_0 & \text{if } s^* < 0 \\
q_K & \text{if } s^* > 0 \\
q(s^*) & \text{otherwise}
\end{cases}
\end{equation}

\end{lemma}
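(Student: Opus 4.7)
The plan is to exploit the structure of the parabolic kernel \eqref{eq:kernel} to reduce the maximization of $p(q(s), x(\tau))$ over $s$ to a minimization of the squared Euclidean distance $\norm{q(s)-x(\tau)}^2$. Since $p$ is a strictly decreasing function of $\norm{q-x(\tau)}^2$ wherever it is nonzero (and identically zero outside that ball), any $s$ maximizing $p$ must also minimize $\norm{q(s)-x(\tau)}^2$ over the segment, and conversely the minimizer of the squared distance produces the largest (possibly clipped to zero) value of $p$. So it suffices to solve the constrained minimization
\begin{equation}
\min_{s\in[0,1]} \phi(s), \qquad \phi(s) := \norm{(1-s)q_0 + s q_K - x(\tau)}^2.
\end{equation}

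First, I would expand $\phi(s)$ as a quadratic in $s$, obtaining $\phi(s) = \norm{q_0-x(\tau)}^2 - 2 s (x(\tau)-q_0)\transpose(q_K-q_0) + s^2 \norm{q_K-q_0}^2$. Since the coefficient of $s^2$ is nonnegative and (assuming $q_0\neq q_K$) strictly positive, $\phi$ is a strictly convex parabola in $s$ with a unique unconstrained minimizer given by the first-order condition $\phi'(s)=0$, which directly yields the closed-form expression \eqref{eq:sstar} for $s^*$.

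Next, I would apply the standard projection-onto-a-segment argument for convex quadratic minimization on $[0,1]$: because $\phi$ is convex, the constrained minimizer is the projection of the unconstrained minimizer onto the feasible interval, i.e., $\mathrm{clip}(s^*,0,1)$. This gives the three cases in \eqref{eq:qstar}, namely $q^*=q_0$ when $s^*<0$, $q^*=q_K$ when $s^*>1$ (noting an apparent typo in the stated condition), and $q^*=q(s^*)$ for $s^*\in[0,1]$. Substituting back into the first branch of \eqref{eq:kernel} (or returning $p=0$ if even $q^*$ lies outside the support ball) gives the maximum value.

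The only subtlety, and the main thing to state carefully, is the equivalence between maximizing $p$ and minimizing the squared distance in the presence of the piecewise definition: on the support ball, maximizing $A-\sigma\norm{q-x(\tau)}^2$ is exactly equivalent to minimizing $\norm{q-x(\tau)}^2$; outside, $p=0$, and this value is dominated by any point inside the support (when one exists) and is otherwise the uniform value of the objective. I would state this explicitly at the start of the proof so that the case split in \eqref{eq:qstar} follows cleanly. Beyond this observation, the argument is a routine quadratic-optimization-on-a-segment calculation, so I do not anticipate any substantial technical obstacle.
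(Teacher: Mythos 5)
Your proposal is correct and follows essentially the same route as the paper: both reduce the problem to a first-order condition on a quadratic in $s$ (the paper differentiates $p_s$ directly, which is the same computation as minimizing the squared distance up to the factor $-\sigma$), yielding \eqref{eq:sstar}. You additionally justify the clipping to $[0,1]$ via convexity and address the zero branch of the kernel and the $s^*>1$ typo in \eqref{eq:qstar} -- details the paper's proof omits -- but this is a completion of the same argument rather than a different approach.
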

\begin{proof}
We first express \( p(q(s),x(t)) \) in terms of \( s \):
\[ p_s(q(s),x(t)) = A - \sigma \norm{(1-s)q_0 + sq_K - x}^2 \]
subject to:
\[ \norm{{(1-s)q_0 + sq_K - x(t)}} \leq \sqrt{\frac{A}{\sigma}} \]
We differentiate \( p_s \) with respect to \( s \):
\[ \frac{dp_s}{ds} = -2\sigma \bigl((1-s)q_0 + sq_K - x(t)\bigr)\transpose(q_K - q_0) \]
Setting this to zero, we get:
\[ \bigl(q(s)-x(t)\bigr)^T (q_K - q_0) = 0 \]
Solving for $s$:
\begin{multline}
    0=((1-s)q_0 + sq_K - x(t))^T (q_K - q_0) \\
    =(q_0 -x(t) + (q_K -q_0)s)^T (q_K - q_0)\\
    =(q_K -q_0)^T (q_K - q_0)s +(q_0 -x(t))^T (q_K - q_0),
\end{multline}
from which \eqref{eq:sstar} follows.
\end{proof}

The value of \( s^* \) is used to determine the optimal point \( q^* \) on the line segment as shown in \eqref{eq:qstar}.
With $q^*$ identified, we can now define the barrier function $\bar{\Phi}_2(x,t;q_0,q_1)$, which incorporates the noise footprint constraints into the control strategy, ensuring that the cumulative effect at $q^*$ remains within the specified limits.

\begin{multline}
    \bar{\Phi}_2(x,t;q_0,q_K) := -2\sigma (q-x(t)) \, u \\
    + \alpha_2 \Bigl(-\bigl(A - \sigma||q-x(t)||^2\bigr) 
    + \alpha_1 \bigl(J_{\text{limit}} - \bar{J}(q,t)\bigr)\Bigr) \geq 0
\end{multline}

Consequently, from \eqref{eq:CBF-QP-OBS}, the refined Quadratic Programming (QP) formulation is given as:
\begin{equation}\label{eq:qp noise}
\begin{aligned}
\min_{u} & \|u - k_{\textrm{ref}}(x)\|^2 \\
\text{s.t.} \quad
& \dot{h}_{obs}(x)+\alpha_1 h_{obs}(x)\geq 0,\\
& \bar{\Phi}_2(x,t;q_0,q_K)\geq 0.
\end{aligned}
\end{equation}
The solution to \eqref{eq:qp noise} (assumin it is feasible) will respect both the obstacle and the noise footprint constraints.


\section{SIMULATION RESULTS}
This section details simulation results demonstrating the quadrotor's navigation using CBFs for obstacle avoidance and noise management. We showcase how these strategies enable the quadrotor to safely navigate and comply with environmental constraints, highlighting the practical effectiveness of our control approach.

\subsection{Simulation parameters}
We evaluate our approach with a simulation in an environment with a single obstacle and the simulation is based on Euler's method for integration through time.The focus of the paper is on local control, and a single obstacle allows us to better demonstrate the effect of the various parameters. Navigating more complex environments would require integration with high-level path planning algorithms such as \rrtstar \cite{karaman2011samplingbased}, which, however, are out of scope for this paper.
The following parameters were used for the simulation: the class-$\cK$ function for the obstacle CBF and noise CBF constraint uses $\alpha_1 = 3.0$; for the noise CBF constraint, we use $\alpha_2 = 6.0$; the size of the quadrotor is given by $r_{\text{quad}} = 0.1$; the Euler integration time step is $\delta = \unit[0.1]{s}$; the obstacle is a square $\cQ=[0,1]\times[0,1]$; finally, the initial position is $x(0)=\smallbmat{3\\3}$.  As the reference controller, we use a simple proportional controller $k_{\textrm{ref}}(x)=x_{\textrm{goal}}-x$, where $x_{\textrm{goal}}=\smallbmat{-2\\ -1}$.
\subsection{Obstacle CBF alone}
Figure \ref{fig:sub1} shows the result of the simulation using the QP with only the obstacle CBF constraints \eqref{eq:CBF-QP-OBS}. In the figure, the trajectory's goal is symbolized by a green star while the starting position is shown in orange, and it is observable that the quadrotor maintains a safe distance from the obstacle towards the goal, adhering to established safety measures.
Since this simulation does not take into account the constraint $J(t) < J_{\text{limit}}$, while the quadrotor effectively avoids the specified physical obstacles, it may exceed the allowable cumulative impact at certain locations. As seen in \Cref{fig:Jxts} the cumulative noise $J_{obs}$ exceeded the limit $J_{limit}$ for the trajectory. 

\begin{figure}
    \centering
    \subfloat[System trajectories for the QP-based control of the quadcopter with box obstacle]{\includegraphics[trim=0cm 4mm 0cm 4mm,clip]{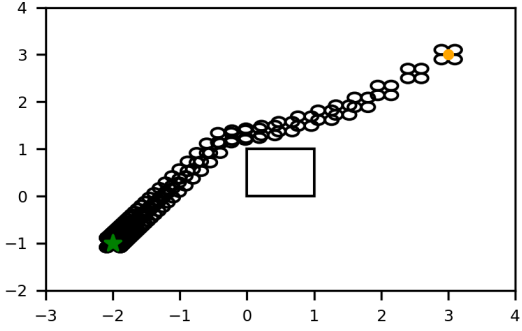}\label{fig:sub1}} \\
    \subfloat[Application of Discretization]{\includegraphics[trim=0cm 4mm 0cm 4mm,clip]{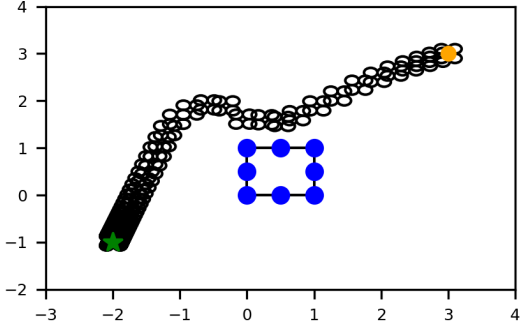}\label{fig:sub2}} \\
    \subfloat[QP-based control of the quadcopter with noise and obstacles using approximation without discretization]{\includegraphics[trim=0cm 4mm 0cm 4mm,clip]{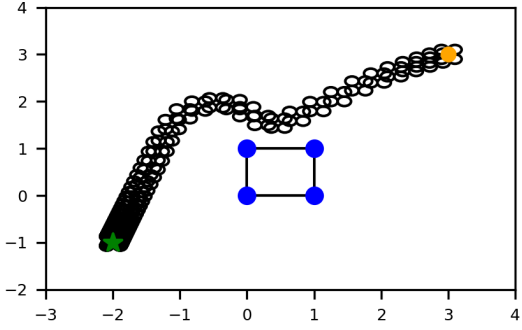}\label{fig:sub3}} \\
    \subfloat[Approximation without discretization with different parameters]{\includegraphics[trim=0cm 4mm 0cm 4mm,clip]{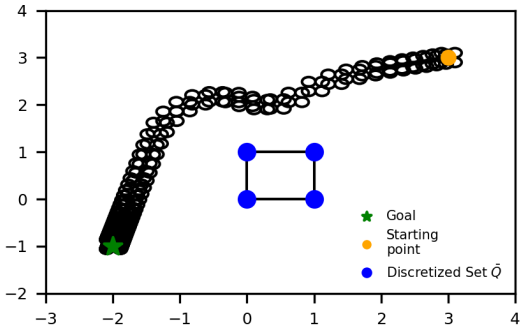}\label{fig:sub4}}
    \caption{System trajectories with just the obstacle, noise using discretization, noise but with approximation without discretization and approximation without discretization with different parameters }
    \label{fig:main}
\end{figure}

\begin{figure}
  \centering
  \includegraphics[trim=0cm 4mm 0cm 0mm,clip]{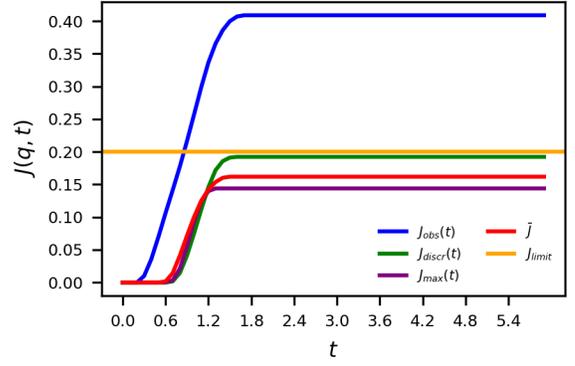}
  \caption{$J_{\textrm{obs}}(t)$, $J_{discr}(t)$, $J_{\max}(t)$, $\bar{J}$ and $J_{limit}$ with time}
  \label{fig:Jxts}
\end{figure}

\subsection{Cumulative noise CBF with discretization}\label{sec:disc_simu}
In this section, we show the simulation of the quadrotor with discretization of sets to account for the noise around the obstacle. For this simulation, we used $J_{\textrm{limit}} = 0.2$, $A = 0.5$, $\sigma = 0.35$ and $K = 8$. \Cref{fig:sub2} shows the result of the simulation, where we use eight constraints obtained by discretizing the boundary of the obstacle and applying individual point-wise constraints as described in \Cref{eq:discretization}. Incorporating the noise CBF constraints in the QP formulation, as visualized in \Cref{fig:sub2}, results in a trajectory that is more considerate of the noise impact of the quadrotor compared to the simulation shown in \Cref{fig:sub1}. This is evident from \Cref{fig:Jxts}, where the cumulative noise  $J_{\textrm{discr}}(t)$, representing the cummulative noise subject to the discretized constraints, remains below $J_{\textrm{limit}}$. Also \Cref{fig:Jxts} shows the  plots of $J_{\max}(t)$ being below the limit, where  $J_{\text{max}}(t) = \max_{i=0}^{99} J(q_i,t)$ and $q_0, q_1, \ldots, q_{99}$  are the discretized points of the square.
\subsection{Cumulative noise CBF with bound \texorpdfstring{$\bar{J}$}{J-bar}}

In this section, we share the results of simulations where we applied a Control Barrier Function (CBF) constraint on $\bar{J}$. The parameters used are the same as above.
Fig. \ref{fig:sub3}  shows the quadrotor navigating toward its goal, detouring upon encountering an obstacle. The escalating noise around this obstacle prompts the quadrotor to maintain distance, thereby steering clear of the accumulating noise while persistently moving toward its target. The difference between this method and the method in \Cref{sec:disc_simu}, is that in the discretization method depicted in Figure \ref{fig:sub2}, the noise constraints are applied only at specific points along the boundary of the obstacle. This method risks overlooking some areas along the boundary where noise accumulation might exceed the limit, because it does not account for the entirety of the line segment, while in the bound $\bar{J}$ approach, a CBF constraint is imposed on an over-approximation of the maximum cumulative noise, ensuring that no point along the line segment will exceed the noise threshold which can be seen in \Cref{fig:Jxts} as \( \bar{J} < J_{\text{limit}}\). The corners of the box, are points used to symbolize the segments forming the boundary of the obstacle $q_0$, $q_1$, $q_2$, and $q_3$.

\subsection{Other parameters}
We also ran the simulation with different parameters where $A=1.0$, $\sigma=0.55$ and the result can be seen in Fig. \ref{fig:sub4} which shows the quadrotor giving more safe distance away from the obstacle while heading towards the goal. This shows that by using a larger noise radius, the quadrotor adjusts its path to keep a wider distance from the obstacle as the noise radius increases.
Finally, we ran a simulation with  $x_{\textrm{goal}}=\smallbmat{0.5\\1}$ near the obstacle, and from Figure \ref{fig:goal at boundary}, it shows the quadrotor unable to get to the goal due to the accumulated noise at the obstacle which can be also seen in Figure \ref{fig:x and y plot of the goal at abstacle} which shows the position of the robot at each time step and the plot shows the quadrotor bouncing back.

\begin{figure}
    \centering
    \subfloat[Goal at the obstacle]{\includegraphics[trim=0cm 4mm 0cm 0mm,clip]{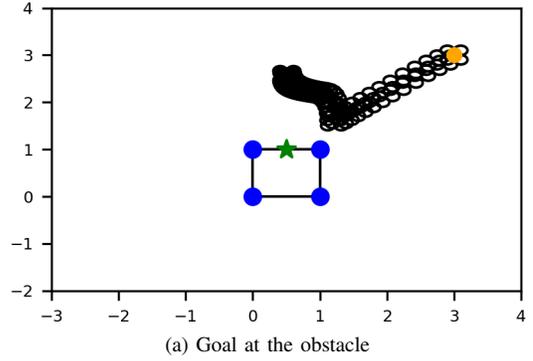}\label{fig:goal at boundary}}

    \subfloat[Position at each time step]{\includegraphics[trim=0cm 4mm 0cm 4mm,clip]{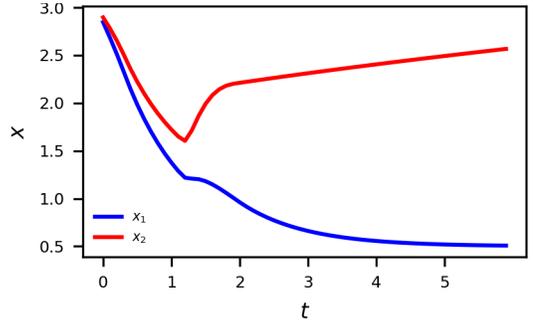}\label{fig:x and y plot of the goal at abstacle}}

    \caption{Trajectory of the quadrotor when the goal is at the boundary of the obstacle}
    \label{fig:main2}
\end{figure}

\section{CONCLUSIONS}
We presented a method to consider integral cost functionals into the framework of higher-order control barrier functions, enabling constraints over entire trajectories rather than just instantaneous states. Additionally, we present an efficient over-approximation method to handle constraints over large regions. Our simulations validate the ability of the agent (a simplified quadrotor) to navigate safely through environments while mitigating the impact of noise in sensitive areas. 
While we used acoustic noise emitted from quadrotors as an illustrative application, the theory we presented could be applied to other scenarios, such as managing light emission in microscopy or controlling vehicle pollution dispersion. 
Future research may extend this work to multi-agent systems, consider more accurate noise propagation and quadrotor dynamical models, and combine the our low-level control constraints into complete path planning solutions for cluttered environments.

\bibliographystyle{biblio/ieee}

\bibliography{biblio/IEEEfull,biblio/IEEEConfFull,biblio/OtherFull,
  references,%
  biblio/tron,%
  biblio/controlCBFs,%
  biblio/formationControl,%
  biblio/websites}

\end{document}